\documentclass[12pt]{article}

\usepackage{a4wide, amsmath,amssymb,amscd,amsthm,makeidx,txfonts,graphicx,url,bm}
\usepackage{tikz,float}
\usetikzlibrary{positioning}
\usetikzlibrary{calc}

\usepackage[osf]{Baskervaldx} 

\usepackage{authblk}

\usepackage{color}
\definecolor{orange}{rgb}{.9,.6,.2}
\definecolor{violet}{rgb}{.5,0,.5}
\definecolor{dg}{rgb}{0,0.67,0}

\definecolor{cof}{RGB}{219,144,71}
\definecolor{pur}{RGB}{186,146,162}
\definecolor{greeo}{RGB}{91,173,69}
\definecolor{greet}{RGB}{52,111,72}

\renewcommand{\bar}{\overline}

\newcommand{\nc}{\newcommand}

\nc{\noi}{\noindent}
\nc{\cmmt}[1]{}

\newcounter{projectcnt}
\nc{\sctn}[1]{{\bigskip\addtocounter{projectcnt}{1}%
\begin{center}{\textbf{Project \theprojectcnt. #1}}\end{center}\medskip}}
\nc{\plainsctn}[1]{{\bigskip\begin{center}{\textbf{#1}}\end{center}\medskip}}

\nc{\dave}[1]{\begin{quote}\em #1\end{quote}}
\nc{\bbP}{{\bf{P}}}
\nc{\N}{{\bf{N}}}
\nc{\bF}{{\bar{\bf{F}}}}
\nc{\Gdave}{{\bf{G}}}
\nc{\I}{{\bf{I}}}
\nc{\cM}{{\cal M}}
\nc{\cK}{{\mathcal K}}
\nc{\cA}{{\cal A}}
\nc{\cO}{{\mathcal O}}
\nc{\cS}{{\mathcal S}}
\nc{\cNF}{{\cal NF}}
\nc{\cMF}{{\cal MF}}
\nc{\cLS}{{\cal LS}}
\nc{\Aff}{\mbox{Aff}}
\nc{\Mod}{{\mbox{\textup{Mod}}}}
\nc{\T}{\mathbb{T}}
\nc{\nf}{S_k^{\textup{\tiny new}}(N, \chi)}
\nc{\of}{S_k^{\textup{\tiny old}}(N, \chi)}
\nc{\mf}{S_k(N, \chi)}
\nc{\Skip}{\mathrm{\rm Skip}}
\nc{\Eis}{\mathrm{Eis}}
\nc{\Poly}{\mathrm{\rm Poly}}
\nc{\Polys}{\mathrm{\rm Polys}}
\nc{\Ext}{\mathrm{\rm Ext}}
\nc{\Ore}{\mathrm{\rm Ore}}
\nc{\Step}{\mathrm{\rm Step}}
\nc{\Mass}{\mathrm{\rm Mass}}
\nc{\mass}{\mathrm{\rm mass}}
\nc{\Vol}{\mathrm{\rm Vol}}

\def\sump{\operatornamewithlimits{\sum\raise7pt\hbox{$\prime$}}}

\DeclareMathOperator{\aff}{aff}

\def\sumpp{\operatornamewithlimits{\sum\raise9pt\hbox{\kern-2pt\scriptsize$(p)$\kern-11pt}\kern6pt}}

\newcommand{\calF}{{\cal F}}

\newcommand{\Spec}{\mbox{Spec}}

\newcommand{\Z}{{\mathbb Z}}
\newcommand{\R}{{\mathbb R}}
\newcommand{\F}{{\mathbb F}}

\newcommand{\C}{{\mathbb C}}

\newcommand{\A}{\mathbb A}

\newtheorem{theorem}{Theorem}[section]

\newcommand{\litem}{\par\noindent\dimen0=\parindent%
    \advance\dimen0 by-4pt
               \hangindent=\dimen0\ltextindent}

\newcommand{\ltextindent}[1]{\hbox to \hangindent{#1\hss}\ignorespaces}
\newcommand{\ltextjndent}[1]{\hbox to \hangindent{#1\hss}\ignorespaces\kern-1ex}

\newtheorem{example}[theorem]{Example}

\begin{document}

\title{Remarks on polynomial count varieties}

\author{Fernando Rodriguez Villegas}
\affil{ICTP \\
{\tt villegas@ictp.it}}
\author{Nicholas M. Katz}
\affil{Princeton University \\
{\tt nmk@math.princeton.edu}}

\maketitle
\section{}
In this short note we prove a couple of facts about polynomial count
varieties, answering natural questions that they raise. A {\it
  polynomial count} $X$ variety is essentially one for which its
number of points over finite fields $\F_q$ is given by a
polynomial~$\#X(\F_q)=C(q)$. Well-known examples include affine or
projective space (or more generally the Grassmanian) and other
standard varieties. For the general definition of ``polynomial count", and the connection of
$C(q)$ with the mixed Hodge structure of $X$ see~\cite{HRV}[Appendix, pp. 617-618].
There we began with $X/\C$ a separated scheme of finite type, and a polynomial $C[t] \in \Z[t]$.
We said that $X/\C$ was  ``polynomial count", with polynomial $C(t)$, if
 for some ``spreading out" $X_R/R$ of $X/\C$ to  a subring $R \subset \C$ which is finitely
generated as a $\Z$-algebra, we had:

For every finite field $\F_q$ and every ring homomorphism  $\phi: R \rightarrow \F_q$,
$$\#X_R(\F_q)=C(q).$$

Notice that even if our $X/\C$ has a descent to $\Z$ and is polynomial count, it may not
be polynomial count over $\Z$ in any naive sense. Simplest example: $x^2+1=0$ is polynomial
count over $\Z[i,1/2]$, with $C$ the constant polynomial $2$, but it is not polynomial
count over $\Z$.

In this paper, we will be concerned with the following situation. 
We are given a separated scheme $X/\Z$  of finite type over $\Z$, and $C[t] \in \Z[t]$ a polynomial.
Given an integer $D \ge 1$, 
We will say that $X/\Z$ is polynomial count ``outside $D$" if, for every finite field $\F_q$ in which $D$ is invertible, we have
$$\#X(\F_q)=C(q).$$
We will say $X/\Z$ is polynomial count if there exists such a $D$.

The two questions we address are the following
\begin{itemize}
\item
 1) If $X/C$ is smooth, polynomial count with $C(t)=t^n$ for some
  $n$, is $X$ isomorphic to affine space $\A^n$?
\item 2) If $X/C$ is polynomial count, is it true that its Hodge
  numbers in a given graded piece of fixed weight satisfy~$h^{p,q}=0$
  unless $p=q$?
\end{itemize}

We show that in both cases the answer is no.

\section{}
Let $H\in \C[x_1,\ldots,x_N]$ be a polynomial with
$N\geq 1$.  To state the main result in this section we need the
following combinatorial constants. For a subset $S\subseteq
\{1,\ldots,N\}$ let
$\Delta_S$ be the Newton polytope of the polynomial obtained by
setting $x_i=0$ in $H$ for every $i\notin
S$, if the specialization is non-zero. Otherwise, set
$\Delta_S=\emptyset$ if the specialization is zero. Then define
\begin{equation}
\label{f-defn}
f_{r,n}:=\#\{S \,|\, \#S=r, \dim \Delta_S=n-1\},
\end{equation}
with $\dim \Delta_S=-1$ if $\Delta_S=\emptyset$.

Let 
$$
\calF_\Delta(x,y):=\sum_{r,n=0}^Nf_{r,n}x^ry^n
$$
be the placeholder polynomial for these constants.

For convenience we let $\sigma_n\subseteq \Z^n$ be the standard
simplex of dimension $n-1$, convex hull of
$(1,0,\ldots,0),(0,1,0,\ldots,0),\ldots,(0,\ldots,0,1)$.  We will say
two lattice polytopes in $\Z^n$ are \emph{equivalent} if there is an
invertible affine transformation over $\Z$ taking one to the other.

\begin{theorem}
\label{simpl-count-thm}
  Let $X\subseteq \A^N$ be the zero locus over $\C$ of a polynomial
  $H\in \Z[x_1,\ldots,x_N]$ with $N\geq 1$ whose Netwon polytope $\Delta$
  is equivalent to $\sigma_N$. Let
$$
X':=X\cap T,
$$
where 
$$
T:=\Spec(\Z[x_1,x_1^{-1},\ldots,x_N,x_N^{-1}])
$$
is the torus. 

Then the varieties $X'$ and  $X$ are polynomial count outside $D$, for
$$D:={\rm\ the \ product\ of \ all\  nonzero\  coefficients\ of\ }H.$$
More
precisely, 

\begin{equation}
\label{simpl-count-0}
\#X'(\F_q)= c_N(q), 
\end{equation}
and
\begin{equation}
\label{simpl-count}
\#X(\F_q)=\sum_{r=0}^N\sum_{n=0}^N
f_{r,n}c_n(q)(q-1)^{r-n},
\end{equation}
where
$$
c_n(q)=
\begin{cases}
\sum_{i=1}^n(-1)^{n-i}\binom{n}{i}\, q^i + (-1)^n&\quad n\geq 0\\
1 & \quad n=0
\end{cases},
$$
 and the $f_{r,n}$ are defined in~\eqref{f-defn}.
\end{theorem}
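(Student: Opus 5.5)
The plan has three steps. First, we reduce to the case where $\Delta=\sigma_N$ by a monomial change of variables on the torus. Second, we count $X'$ directly as the complement of a union of coordinate-type subtori. Third, we stratify $\A^N$ by which coordinates vanish and assemble $\#X$ from torus counts on each stratum.

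\textbf{Step 1: reduction to $\Delta=\sigma_N$.} Since $\Delta$ is equivalent to $\sigma_N$, it has exactly $N$ vertices, and they are affinely independent. So $H=\sum_{j=1}^N a_j x^{m_j}$ with each $a_j\neq 0$; any other lattice points of $\Delta$ that carry nonzero coefficients would be harmless, but here I assume the monomials of $H$ are exactly the $N$ vertices. If $H$ had further monomials, those coefficients would also enter $D$, and I would have to argue separately. I would first check that this is implicit in the way $\sigma_N$ is used.

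Write $H=x^{m_1}\bigl(a_1+\sum_{j\ge 2} a_j x^{m_j-m_1}\bigr)$. The vectors $m_j-m_1$ for $j\geq 2$ form part of a $\Z$-basis of $\Z^N$, because the equivalence is given by an invertible affine map over $\Z$ taking $\sigma_N$ to $\Delta$. Complete them to a basis, and let $y_1,\dots,y_N$ be the corresponding monomial coordinates. This is an automorphism of the torus $T$ over $\Z$, so it preserves point counts over every $\F_q$. On $T$ the factor $x^{m_1}$ is a unit. Hence $X'$ becomes the locus $a_1+a_2y_2+\dots+a_Ny_N=0$ in $\mathbb{G}_m^N$, and $y_1$ is free.

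\textbf{Step 2: counting $X'$.} Rescaling each $y_j$ by $a_j$, which is legitimate when $D$ is invertible in $\F_q$, we must count solutions of $1+y_2+\dots+y_N=0$ with all $y_j\in\F_q^\times$, and multiply by $(q-1)$ for the free variable $y_1$. Let $L_k$ be the number of $(z_1,\dots,z_k)\in(\F_q^\times)^k$ with $1+\sum z_i=0$. Inclusion–exclusion on the affine hyperplane, whose intersection with each coordinate subspace of dimension $k-1-j$ has $q^{k-1-j}$ points, gives
\[
L_k=\sum_j(-1)^j\binom{k}{j}q^{k-1-j}+(\text{correction when all } z_i=0 \text{ is forced}).
\]
Equivalently, $L_k=\bigl((q-1)^k-(-1)^k\bigr)/q$, by the recursion $L_k=(q-1)^{k-1}-L_{k-1}$. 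Then I would verify by direct expansion that $(q-1)L_{N-1}=c_N(q)$, checking small cases, e.g.\ $c_1=0$ and $c_2=q-2$. This algebraic identity is routine.

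\textbf{Step 3: stratifying $X$.} Write $\A^N=\bigsqcup_S T_S$, where $T_S$ is the locus where $x_i\neq 0$ exactly for $i\in S$. Then $X\cap T_S$ is the zero locus in the torus $T_S\cong\mathbb{G}_m^{\#S}$ of the specialization $H_S$. The Newton polytope $\Delta_S$ of $H_S$ is a face-like subset of the vertex set of a simplex: it is the convex hull of those vertices $m_j$ supported in $S$, so it is again a simplex, of some dimension $n-1$.

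There are two cases. If $H_S=0$, then $n=0$ and the stratum contributes $(q-1)^r$, which matches $c_0(q)(q-1)^{r}=(q-1)^r$. Otherwise, Step 1 applies inside $\mathbb{G}_m^r$ to an $n$-vertex simplex. The difference is that the lattice may not be saturated: the edge vectors of $\Delta_S$ still extend to a basis of $\Z^S$, because they are a subset of vectors that form part of a basis of $\Z^N$ (coming from $m_j-m_1$, or from differences among the $S$-supported vertices, which lie in the span of the original basis vectors). Hence the count is $c_n(q)(q-1)^{r-n}$, where the extra free torus factors come from the completion of the basis. Summing over $S$ and grouping by $(r,n)$ yields \eqref{simpl-count}. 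Polynomial count outside $D$ follows for both $X$ and $X'$.

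\textbf{Expected obstacle.} The main obstacle is the saturation claim in Step 3, namely that after restricting to the vertices supported in $S$, the differences still form part of a $\Z$-basis of $\Z^S$. For this I would use that the vertices of $\sigma_N$ map to a set whose pairwise differences span a primitive sublattice, and that intersecting with the coordinate sublattice $\Z^S$ preserves primitivity. I would also need to check the degenerate case in which $\Delta_S$ is a single point (so $n=1$), where the stratum is empty and $c_1=0$.
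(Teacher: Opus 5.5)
Your proposal is correct and follows the paper's proof. Both use a monomial change of coordinates on the torus to reduce $X'$ to a linear equation, then stratify $\A^N$ into the coordinate tori $T_S$, on each of which $\Delta_S$ is a sub-simplex. Your substitution through the differences $m_j-m_1$, together with the saturation check in $\Z^S$, is actually more careful than the paper's $y_i=x^{m_i}$, which need not be a torus automorphism: in the Russell example the vertex matrix has determinant $-6$.

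Two small fixes:
\begin{itemize}
\item $\sigma_N$, and hence $\Delta$, has no lattice points other than its vertices, so $H$ automatically has exactly $N$ monomials.
\item $c_2=(q-1)L_1=q-1$, not $q-2$. The identity $(q-1)L_{N-1}=c_N(q)$ holds only with $q^{i-1}$ in place of $q^i$ in the stated formula for $c_n$; that is a typo, which the paper's proof silently corrects.
\end{itemize}
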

\begin{proof}
  It is easy to verify that for $N\ge 1$
$$
\#\{(x_1,\ldots,x_N)\in (\F_q^\times)^N \,|\,
x_1+\cdots
+x_N=0\}= c_N(q)=\sum_{n=1}^N(-1)^{N-n}\binom Nn\, q^{n-1} + (-1)^N
$$
by M\"obius inversion as
$$
\sum_{n=0}^N\binom N nc_n(q)=q^{N-1},\qquad N\geq 1.
$$
Since $\Delta$ is equivalent to the standard
simplex~\eqref{simpl-count-0} follows. Indeed, 
let $y_i:=x^{m_i}$, where $m_1,\ldots,m_N$ are
the vertices of $\Delta$. Given the equivalence of $\sigma_N$ and
$\Delta$ this yields an automorphism $(x_1,\ldots,x_N)\mapsto
(y_1,\ldots,y_n)$ of the torus. But then the two equations
$$
x^{m_1}+\cdots+x^{m_N}=0, \qquad \qquad y_1+\cdots+y_N=0
$$
have the same number of solutions in $\F_q^\times$.

To prove~\eqref{simpl-count} we proceed recursively. Notice that
specializing $H(x_1,\ldots,x_N)$ by setting $x_i=0$ for $i\notin S$
with $S\subseteq \{1,\ldots,N\}$ results in a polynomial
$H_S(x_1,\ldots,x_N)$ whose Newton polynomial $\Delta_S$ is a face of
$\Delta$ (or empty). Therefore, $\Delta_S$ is also equivalent to
$\sigma_n$ for some $n\geq 0$ or empty (if $H_S$ vanishes
completely). In particular, if $\Delta_S$ is non-empty the number of
solutions of $H_S(x_1,\ldots,x_N)$ with $x_i\in \F_q^\times$ is
$c_n(q)(q-1)^{r-n}$, where $r:=\#S$. On the other hand, if $\Delta_S$
is empty then the number of solutions is $(q-1)^r$. The claim follows.
\end{proof}
\begin{example}
Let $n_1,\ldots,n_r$ be a tuple of pairwise coprime positive integers 
and let
$$
X:\quad x_1^{n_1}+\cdots + x_r^{n_r}=0.
$$
Then $X/\Z$ is polynomial count with $D=1$: more precisely,
\begin{equation}
\label{diag-count}
X(\F_q)=q^{r-1}.
\end{equation}
\begin{proof}
To see this note that the coprimality condition guarantees that the
Newton polytope $\Delta$ is equivalent to $\sigma_r$. Indeed,
translating by $-(n_1,0,\ldots,0)$ the polytope $\Delta$ is equivalent
to the simplex with vertices
$$
(0,\ldots,0),(-n_1,n_2,0,\ldots,0),\ldots (-n_1,0,\ldots,0,n_r).
$$
Let
$$
M:=\left(
\begin{array}{ccccc}
-n_1&-n_1&\cdots&&-n_1\\
n_2&0&\cdots&&0\\
0&n_3&0&\cdots&0\\
&&\vdots&&\\
0&\cdots&&0&n_r
\end{array}.
\right)
$$
To prove the claim is enough to show that we may find a column vector
$x:=(x_1,\ldots,x_r)$ with $x_i\in \Z$ such that the matrix
$U:=\left(x\,|\,M\right)\in \Z^{r\times r}$, adding $x$ to $M$ as its
first column, is invertible. Given such a $U$ we get
$$
M=UM',
$$
where
$$
M':=\left(
\begin{array}{ccccc}
0&0&\cdots&&0\\
1&0&\cdots&&0\\
0&1&0&\cdots&0\\
&&\vdots&&\\
0&\cdots&&0&1
\end{array}
\right)
$$
and since the simplex with vertices
$$
(0,\ldots,0),(0,1,0,\ldots,0),\ldots (0,\ldots,0,1)
$$
is equivalent to $\sigma_r$ the same is true of $\Delta$.

To find $x$ we need to solve for integers $x_i$ such that
$$
\det(U)=-d_rx_1+\cdots (-1)^rd_1x_r=\pm 1,\qquad d_i:=n_1\cdots \widehat{n_i}\cdots n_r.
$$
By the coprimality assumption the vector $(d_1,\ldots,d_n)$ is
primitive and hence such a solution exists.

It is also clear that $\calF_\Delta=\calF_{\sigma_r}$. The claim now follows
from Theorem~\ref{simpl-count-thm}.
\end{proof}
\end{example}
\begin{example}
Consider the Russell threefold~\cite{MR2327241}
$$
X: \quad x^2y+x+z^2+t^3=0.
$$
Its Newton polytope is the $3$-dimensional simplex in $\Z^4$ with
vertices the columns of the matrix
$$
\left(
\begin{array}{cccc}
2&1&0&0\\
1&0&0&0\\
0&0&2&0\\
0&0&0&3
\end{array}
\right)
$$
which is in fact equivalent to $\sigma_4$. We have
$$
\calF_\Delta(x,y)=x^4y^4 + 3x^3y^3 + x^3y^2 + 4x^2y^2 +
2x^2y + 3xy + x + 1.
$$
Then formula~\eqref{simpl-count} gives
$$
X(\F_q)=q^3.
$$
(That the Russell threefold is polynomial count over $\Z$, with $D=1$ and $C(t)=t^3$, can also be verified directly, see the next example)

\end{example}
It is known that $X(\C)$ is diffeomorphic to $\R^6$ but $X$ is not
isomorphic to $\A^3$. (See the discussion in~\cite{MR2327241}[\S1] and
the literature cited therein for more details on this type of
phenomena.) This answers our first question.

As another example, it is known that the fourfold
$$
x+x^2y+z^2+t^3+u^5 =0
$$
is diffeomorphic to $\R^8$ but not isomorphic to $\C^4$ as in
Russell's case (see ~\cite{MR2327241}[Remark 11.1]).

\begin{example}
  In general, if we let $n_1,\ldots,n_r$ be a tuple of pairwise coprime
  positive integers and consider the affine variety
$$
X:\quad x^2y+x+\sum_{i=1}^r x_i^{n_i}= 0
$$
in $\A^{r+2}$ with coordinates $(x,y,x_1,\ldots,x_r)$. 
Then $X$ is smooth over $\Z$ and 
$$
\#X(\F_q)=q^{r+1}.
$$
\begin{proof}
It is straightforward to check that $X$ is smooth. It follows easily
from the previous example that its Newton polytope is equivalent to
$\sigma_{r+2}$ (here again the coprimality assumption is crucial.)

It is in fact not hard to verify directly that $\#X(\F_q)=q^{r+1}$
without appealing to Theorem~\ref{simpl-count-thm}.  Indeed, for
$x\neq0$ we get $q^r(q-1)$ points by fixing $(x_1,\ldots,x_r)$ and
solving for $y$. For $x=0$ instead, we have $q^{r-1}$ points from
$\sum_{i=1}^rx_i^{n_i}=0$ by~\eqref{diag-count} times $q$ as $y$ is
arbitrary independent of $(x_1,\ldots,x_r)$.
\end{proof}

\end{example}

\section{}

Suppose X is polynomial count. Is it true that $h^{p,q;i}=0$ in weight
$i$ unless $p=q$?

This is not the case. Here is the simplest example.
Take an elliptic curve $E$, and denote by $E_{\aff}$ the affine curve which is
the complement of the origin. e.g., $E_{\aff}$ is the curve in $\A^2$ of literal
equation 
$$
E_{\aff}:\quad y^2 = f(x),
$$
with $f$ a cubic with distinct roots. 

Now consider the abstract scheme $X$ given by the disjoint union of
the affine variety $Y:=E_{\aff}\subseteq \A^2$, and the affine variety
$Z:=\A^2 \setminus E_{\aff}\subseteq \A^3$ defined by $z(y^2-f(x))=1$.

By the excision sequence for $Z$
$$
0=H^1_c(\A^2) \rightarrow H^1_c(Y) \rightarrow H^2_c(Z) \rightarrow
H^2_c(\A^2)=0.
$$
Hence
$$
H^1_c(Y) \cong H^2_c(Z).
$$
By the dual of Lefschetz affine, we have $H^1_c(Z)= 0$.

On the other hand, the excision sequence for $Y$ as $E \setminus 0$
shows that $H^1_c(Y) \cong H^1(E)$ and $H^2_c(Y) \cong H^2(E)$.  Thus
we conclude that
$$
gr_W^1(H^2_c(Z)) \cong H^1(E),
$$
so $Z$ has $h^{0,1;2}=h^{1,0,2} = 1$ and 
$h^{0,1;1}=h^{1,0;1} =0$ (because the entire $H^1_c$ vanishes).
Meanwhile, 
$$
gr_W^1(H^1_c(Y)) =H^1(E),
$$
so we conclude that $Y$ has $h^{0,1;1}=h^{1,0;1}=1$ and $Y$ has
$h^{0,1;2}=h^{1,0;2}=0$ (because $H^2_c \cong H^2(E)$ is of type
$(1,1)$).  So for $X$ we have $h^{0,1;1}=h^{1,0;1}=1$ and
$h^{0,1;2}=h^{1,0,2} = 1$. But this disjoint union is visibly
polynomial count, it has $q^2$ points.

Pick a point  $\alpha \in \A^2 \setminus E_{\aff}$, and a point
$\beta \in \A^3$. Then in $\A^5 = \A^2 \times \A^3$, we have the disjoint closed sets
$Y\times \beta$ and $\alpha \times Z$. In other words,
we can see our example as being a closed set $W\subseteq \A^5$. Then again playing 
with the excision sequence for $M:=\A^5 \setminus W$, we will get
$$
H^i_c(W) \cong  H^{i+1}_c(M),\qquad i \le 8,
$$
in particular for $i=1$ and $i=2$. Thus we find that $M$ is smooth and
irreducible and has $h^{0,1;2}=h^{1,0;2}=1$ and
$h^{0,1;3}=h^{1,0;3}=1$, though it too is trivially polynomial count,
it has $q^5 -q^2$ points.

\end{document}